\newtheorem{theorem}{Theorem}[section]
\newtheorem{lemma}[theorem]{Lemma}
\numberwithin{equation}{section}
\begin{document}
	
\author{Amartya Goswami}
\address{[1] Department of Mathematics and Applied Mathematics, University of Johannesburg, South Africa;
\newline
\hspace*{.42cm}[2] National Institute for Theoretical and Computational Sciences (NITheCS), South Africa.}
\email{agoswami@uj.ac.za}
	
\title{Spectrality and monoids}
	
\date{}
	
\subjclass{20M12, 20M14, 54F65}


	
\keywords{monoid; spectral space}
	
\maketitle
	
\begin{abstract}
We prove that the set of proper ideals of a monoid endowed with coarse lower topology is a spectral space.
\end{abstract}
	
\smallskip 

\section{Introduction}
\smallskip 

The aim of this section is to highlight on the study conducted on spectral spaces in relation to diverse structures and a variety of topologies.
The concept of a spectral space was first introduced in \cite{H69}. In this framework, spectral spaces are defined as the spectra of prime ideals of rings, equipped with the Zariski topology. Since its inception, the study of spectral spaces has expanded across various contexts. Several instances illustrate this diversification. Apart from the Zariski topology, multiple other topologies such as Lawson, Scott, lower, upper, ultra-filter, patch, inverse, and others have been explored for prime ideals (for comprehensive insights, we refer our reader to \cite{DST19}).

Among distinct classes of ideals, the characterization of the set of all ideals under the lower topology as a spectral space was established in \cite{P94}. Furthermore, \cite{FFS2016} demonstrated that spaces of all ideals, proper ideals, and radical ideals of a ring, endowed with a hull-kernel-type topology, are also spectral. It is noteworthy that the Zariski-Riemann space of valuation domains within a given field stands as an early example of a specialized spectral space (see \cite{DF86, DFF87, FS20}).

In \cite{FFS16}, investigation into semistar operations (of finite type) led to the identification of their space, equipped with a topology reminiscent of the Zariski topology, as a spectral space. Additionally, \cite{R20} established that the collection of continuous valuations on a topological monoid, with the topology determined by any finitely generated ideal, forms a spectral space.

In the context of module, \cite{AH12} introduced conditions under which prime spectra of modules qualify as spectral spaces. Another pertinent avenue involving spectral spaces is the realm of Stone duality. The prime filters of a distributive lattice, equipped with the Stone topology, constitute a spectral space (see \cite[Chapter 3]{DST19}). Similarly, the proper filters of a Boolean algebra or, more generally, an ortholattice, endowed with a Stone-like topology, exhibit spectral space properties (for references, see \cite{BH20, MY22}).

Although there are intensive studies of various classes of ideals for monoids (or semigroups with identity) have been done (see \cite{Anj81,Aub53,AJ84,Cho77, CP61, DP22, Gri01, Gri50, Kis63, Maj22, PK92, Sch69, Sch76}), to the best of the author's knowledge, the concept of spectrality within the context of monoids has not previously been studied. This endeavour seeks to furnish an exemplification of a spectral space linked to monoids. The proof of our main result is self-contained and remains constructible topology-independent of any specific topology. In establishing the notion of spectrality, we have employed a technique (see Lemma \ref{cso}) that circumvents the need to verify existence condition of certain type of basis of a spectral space. It is worth mentioning that the extension of the algebraic structure to semigroups does not offer a viable generalization when exploring spectrality. Notably, for the fulfilment of quasi-compactness--a condition integral to spectrality--the presence of a multiplicative identity assumes necessary (see \textsection 3, Proof (2)).

To ensure the self-contained nature of this paper, all essential definitions and proofs have been  incorporated.
\smallskip 

\section{Preliminaries} \label{prlm}
\smallskip 

A (multiplicative) \emph{ commutative monoid} is a system $(M, \cdot, 1)$ such that $(M,\cdot)$ is a commutative (multiplicative) semigroup with $1$ as the multiplicative identity. For simplicity, we shall write $a b$ for $a\cdot b$, for all $a,$ $b\in M$. An \emph{ideal of} $M$ is a nonempty subset $I$ of $M$ such that $im\in I$, whenever $i\in I$ and $m\in M$. If $\{I_{\lambda}\}_{\lambda \in \Lambda}$ is a family of ideals of $M$, then so are their intersection $\bigcap_{\lambda \in \Lambda}I_{\lambda}$ and their \emph{sum} $\sum_{\lambda \in \Lambda}I_{\lambda}$(that is, the ideal generated by $\bigcup_{\lambda \in \Lambda}I_{\lambda}$).    The notation $\mathcal{I}_M$ will denote the set of all ideals of $M$. It is easy to see that $\mathcal{I}_M$ is a complete lattice. An ideal $I$ of $M$ is called \emph{proper} if $I\neq M$, and by $\mathcal{I}^+_M$ we shall denote the set of all proper ideals of $M$. We say a proper ideal $J$ of a monoid $M$ is \emph{maximal} if there is no proper ideal of $M$ that properly contains $J$. 

Let $L$ be a complete lattice and $x\in L$. A \emph{cover of} $x$ is a family $\{y_{\lambda}\}_{\lambda\in \Lambda}$ of elements of $L$ such that $x\leqslant \bigvee_{\lambda\in \Lambda}y_{\lambda}$. An element $x$ of $L$ is called \emph{compact} if every cover of $x$ has a finite subcover. A lattice is \emph{algebraic} if it is complete and every element is a least upper
bound of compact elements. It is easy to see that the complete lattice $\mathcal{I}(M)$ is algebraic, for every ideal is a sum of finitely generated (and hence compact) ideals of $M$. This property of $\mathcal{I}(M)$ is going to play a crucial role in our proof of the main result. 

Let us now recall a few relevant terminologies from topology. A space is called \emph{quasi-compact} if every open cover of it has a finite subcover, or equivalently, the space satisfies the finite intersection property. In this definition of quasi-compactness, we do not assume the space is $T_2.$ A closed subset $S$ of a space $X$ is called \emph{irreducible} if $S$ is not the union of two properly smaller closed
subsets of $X$. A  space $X$ is called \emph{sober} if every non-empty irreducible closed subset $\mathcal K$ of $X$ is of the form: $\mathcal K=\mathcal{C}_{\{x\}}$, the closure of an unique singleton set $\{x\}$.
By \cite{H69}, a \emph{spectral space} is a topological space that is  quasi-compact, sober,  admitting a basis of quasi-compact open subspaces that is closed under finite intersections. 

Let $M$ be a monoid. The coarse lower topology on  $\mathcal{I}^+_M$ is the topology for which the sets of the type: 
\[
\mathcal{V}_J=
\left\{I\in \mathcal{I}^+_M\mid J\subseteq I \right\}, \qquad (J\in \mathcal{I}_M);
\]
form a subbasis of closed sets, 
and by $\mathcal{I}^+_M$, we shall also denote the topological space.   
\smallskip 

\section{The Main Theorem}
\smallskip 

We are now prepared to present and proof our main result.

\begin{theorem}\label{mth}
Let $M$ be a monoid. Then the set $\mathcal{I}^+_M$ of proper ideals of $M$ endowed with coarse lower topology is a spectral space.
\end{theorem}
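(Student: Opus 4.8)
The plan is to verify the three defining properties of a spectral space—quasi-compactness, sobriety, and the existence of a basis of quasi-compact open sets closed under finite intersection—for the space $\mathcal{I}^+_M$ with the coarse lower topology. Before doing so, I would record the structure of the closed sets explicitly: a subbasis of closed sets is given by the $\mathcal{V}_J$ with $J \in \mathcal{I}_M$, so finite unions of such sets together with arbitrary intersections generate all closed sets. The complementary open subbasis consists of sets $\mathcal{U}_J = \mathcal{I}^+_M \setminus \mathcal{V}_J = \{I \in \mathcal{I}^+_M \mid J \not\subseteq I\}$. Since $J = \sum_{\lambda} (j_\lambda)$ is the sum of principal ideals it generates, I would first reduce to principal $J=(a)$: one checks that $\mathcal{V}_J = \bigcap_\lambda \mathcal{V}_{(j_\lambda)}$, so the sets $\mathcal{U}_{(a)}$ for $a \in M$ form an open subbasis, and crucially $J \subseteq I$ iff every generator lies in $I$. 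This reduction to principal (equivalently, compact) ideals is exactly where I expect the algebraicity of $\mathcal{I}(M)$, emphasized in the preliminaries, to earn its keep.

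For quasi-compactness, I would use the finite-intersection-property formulation. Given a family $\{\mathcal{V}_{J_\lambda}\}$ of basic closed sets (finite unions of subbasic ones) with the finite intersection property, I must produce a proper ideal $I$ lying in all of them, i.e. a proper ideal containing suitable generators. The natural candidate is to take $I$ to be a maximal proper ideal containing the relevant data; here the multiplicative identity $1$ is essential, since $I$ proper means $1 \notin I$, and Zorn's lemma applied to proper ideals avoiding $1$ guarantees maximal proper ideals exist. I anticipate this is where the remark in the introduction—that semigroups without identity fail quasi-compactness (see \textsection 3, Proof (2))—becomes operative, and I would keep the role of $1$ in clear view throughout.

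For the basis of quasi-compact opens, rather than exhibiting and verifying such a basis directly, I would invoke the technique flagged in the introduction, namely Lemma \ref{cso}, which the author indicates circumvents the need to check the existence condition for a quasi-compact basis; presumably it lets one deduce spectrality from a more tractable criterion phrased in terms of the algebraic lattice $\mathcal{I}(M)$ and a subbasis, so I would cast the verification in whatever form that lemma demands. Sobriety is the remaining piece: given a nonempty irreducible closed set $\mathcal{K}$, I would show it has a unique generic point. The candidate generic point is the intersection (or an appropriate meet) of all ideals belonging to $\mathcal{K}$; irreducibility should force this meet to be a proper ideal whose closure is exactly $\mathcal{K}$, and uniqueness follows because the coarse lower topology is $T_0$ (distinct proper ideals are separated by some $\mathcal{U}_{(a)}$, since two distinct ideals differ by an element).

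The main obstacle I expect is quasi-compactness, specifically the argument that produces a single proper ideal witnessing the finite intersection property; the difficulty is genuinely structural rather than routine, because it is precisely the step that fails for semigroups lacking $1$, and so the proof must use the identity in an indispensable way—most naturally through the existence of maximal proper ideals via Zorn's lemma and the fact that a proper ideal is exactly one omitting $1$. The sobriety argument, by contrast, I expect to reduce cleanly to the lattice-theoretic observation that the meet of the ideals in an irreducible closed set is the sought generic point, with $T_0$-separation handling uniqueness.
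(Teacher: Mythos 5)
Your central gap is the third spectrality condition---the existence of a basis of quasi-compact open sets closed under finite intersections---which is precisely the condition the paper flags as the cumbersome one. You defer it entirely to Lemma~\ref{cso}, but you guess that lemma's content incorrectly: it is not a ``more tractable criterion phrased in terms of the algebraic lattice and a subbasis''; it states that a quasi-compact, sober, \emph{open subspace of a spectral space} is itself spectral. To invoke it you must therefore supply two ingredients that appear nowhere in your proposal: (i) that the ambient space $\mathcal{I}_M$ of \emph{all} ideals (including $M$ itself), under the coarse lower topology, is spectral---in the paper this is exactly where the algebraicity of $\mathcal{I}_M$ earns its keep, via Priestley's theorem \cite[Theorem 4.2]{P94}, not in the reduction to principal generators you describe---and (ii) that $\mathcal{I}^+_M$ is open in $\mathcal{I}_M$, which holds because its complement $\{M\}$ equals $\mathcal{V}_M$ and hence is closed. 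Without both of these, your plan contains no argument at all for the quasi-compact basis; ``cast the verification in whatever form that lemma demands'' is a placeholder, not a proof. (A direct verification is in fact possible---the sets $\mathcal{U}_{(a_1)}\cap\cdots\cap\mathcal{U}_{(a_n)}$ do form such a basis, each quasi-compact because the union of all proper ideals omitting $a_1,\dots,a_n$ is the largest such ideal, unions of ideals of a monoid being ideals---but that argument, or the paper's, has to actually be made.)

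There is also a soft spot in your quasi-compactness step. You work with \emph{basic} closed sets, i.e.\ finite unions $\bigcup_i\mathcal{V}_{J_i}$, and propose to take ``a maximal proper ideal containing the relevant data''; but membership in such a union is a disjunction ($J_i\subseteq I$ for \emph{some} $i$), so there is no single collection of data for a maximal ideal to contain, and Zorn's lemma by itself produces no point of the intersection. You must either reduce to \emph{subbasic} closed sets via the Alexander subbasis theorem---the paper's route: if $\bigcap_\lambda\mathcal{V}_{I_\lambda}=\emptyset$ then $\sum_\lambda I_\lambda=M$, so $1$ lies in a finite subsum and a finite subfamily already has empty intersection (your instinct that everything hinges on $1$ is correct here)---or observe that a monoid which is not a group has a largest proper ideal, namely its set of non-units, which lies in every nonempty subbasic, hence every nonempty basic, closed set. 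Your sobriety sketch is the part closest to the paper: the meet is indeed the generic point, and uniqueness via $T_0$ matches the paper's argument, but ``irreducibility should force this'' is the entire content of the existence step---one must use irreducibility to select, for each $\lambda$, a single index $i(\lambda)$ with $\mathcal{K}\subseteq\mathcal{V}_{J_{\lambda i(\lambda)}}$, so that $\mathcal{K}=\mathcal{V}_J$ for the proper ideal $J=\sum_\lambda J_{\lambda i(\lambda)}$, which is then the generic point.
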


To prove Theorem \ref{mth}, we need to show the following:
\begin{enumerate}
\item The space $\mathcal{I}^+_M$ is quasi-compact.

\item The space $\mathcal{I}^+_M$ is sober.

\item The coarse lower topology on $\mathcal{I}^+_M$ admits a basis of quasi-compact open subspaces that is closed under finite intersections.
\end{enumerate}

Our strategy here is slightly different. Although (1) and (2) are comparatively easy to verify, however, condition (3) is more cumbersome. Thanks to the next lemma, the checking of (3) can be avoided with an expense of showing two additional requirements as we shall see after the proof this lemma.

\begin{lemma}\label{cso}
A quasi-compact, sober, open subspace of a spectral space is spectral. 
\end{lemma}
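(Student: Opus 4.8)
The plan is to invoke the stated definition of spectrality directly. Since an open subspace $Y$ of a spectral space $X$ is, by hypothesis, already quasi-compact and sober, the only missing ingredient is a basis of quasi-compact open subsets of $Y$ that is closed under finite intersections. I would manufacture such a basis by restricting a basis of the ambient spectral space $X$, and the entire argument will hinge on the hypothesis that $Y$ is \emph{open} in $X$.

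First I would record the elementary observation that, because $Y$ is open in $X$, a subset of $Y$ is open in the subspace topology of $Y$ precisely when it is open in $X$. Indeed, every subspace-open set is of the form $U\cap Y$ with $U$ open in $X$, and $U\cap Y$ is then open in $X$ as an intersection of two open sets; conversely, any open subset of $X$ contained in $Y$ is trivially subspace-open. This equivalence is exactly where openness is used, and it is the feature that cannot be dropped.

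Next, let $\mathcal{B}$ be a basis of $X$ consisting of quasi-compact open sets and closed under finite intersections, which exists since $X$ is spectral. I would put $\mathcal{B}_Y=\{B\in\mathcal{B}\mid B\subseteq Y\}$ and verify three points. First, $\mathcal{B}_Y$ is a basis for $Y$: given a subspace-open $V\subseteq Y$ and a point $x\in V$, the set $V$ is open in $X$ by the previous paragraph, so there is $B\in\mathcal{B}$ with $x\in B\subseteq V\subseteq Y$, whence $B\in\mathcal{B}_Y$. Second, each $B\in\mathcal{B}_Y$ is quasi-compact as a subspace of $Y$: since $B\subseteq Y\subseteq X$, the subspace topology that $B$ inherits from $Y$ coincides with the one it inherits from $X$, and quasi-compactness is intrinsic to the subspace, so $B$ remains quasi-compact. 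Third, $\mathcal{B}_Y$ is closed under finite intersections: if $B_1,B_2\in\mathcal{B}_Y$, then $B_1\cap B_2\in\mathcal{B}$ because $\mathcal{B}$ is, and $B_1\cap B_2\subseteq Y$, so $B_1\cap B_2\in\mathcal{B}_Y$.

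Combining these, $\mathcal{B}_Y$ is a basis of quasi-compact open subspaces of $Y$ closed under finite intersections; together with the standing hypotheses that $Y$ is quasi-compact and sober, this is precisely Hochster's definition, so $Y$ is spectral. I do not anticipate a serious obstacle: the statement is essentially a bookkeeping exercise, and the single point that genuinely demands care is the systematic use of the openness of $Y$ in the first step, without which subspace-open sets of $Y$ need not be open in $X$ and the restricted-basis argument would collapse.
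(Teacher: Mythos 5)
Your proof is correct and follows essentially the same route as the paper: both arguments hinge on the observation that openness of the subspace makes subspace-open sets coincide with ambient-open sets, and then obtain the required basis by restricting quasi-compact open subsets of $X$ to those contained in the subspace (the paper uses all quasi-compact opens, you restrict a fixed basis $\mathcal{B}$ — an immaterial difference). No gaps; your emphasis on where openness is indispensable matches the paper's reasoning exactly.
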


\begin{proof} ---
Suppose that $S$ is a quasi-compact, sober, open subspace of a spectral space $X$. Since $S$ is quasi-compact and sober, it is sufficient to prove that the set $\mathcal{O}_{S}$ of compact open subsets of $S$ forms a basis of a topology that is closed under finite intersections. It is obvious that a subset $T$ of $S$ is open in $S$ if and only if $T$ is open in $X$, and hence a subset $T$ of $S$ belongs to $\mathcal{O}_{S}$ if and only if $T$ belongs to $\mathcal{O}_{X}.$ Now 
using these facts, we argue as follows.

Let $U$ be an open subset of $S$. Since $U$ is also open in $X$, we have $U=\cup\, \mathcal{U},$ for some subset $\mathcal{U}$ of $\mathcal{O}_{X}.$ But each element of $\mathcal{U}$ being a subset of $U$ is a subset of $S$, and it belongs to $\mathcal{O}_{S}.$ Therefore, every open subset of $S$ can be presented as a union of compact open subsets of $S$. Now it remains to prove that $\mathcal{O}_{S}$ is closed under finite intersections, but this immediately follows from the fact that $\mathcal{O}_{X}$ is closed under finite intersections. 
\end{proof} 
\smallskip

\textit{Proof of Theorem \ref{mth}} ---
It is now clear that to prove the theorem, it is sufficient to check the conditions in Lemma \ref{cso} by taking $X=\mathcal{I}_M$ and $S=\mathcal{I}^+_M$. Therefore, our objective is now to verify the following:

\begin{enumerate}
	
\item \label{ngs}
$\mathcal{I}_M$ is a spectral space;
	
\item \label{oco} $\mathcal{I}^+_M$ is quasi-compact;
	
\item \label{tso} $\mathcal{I}^+_M$ is sober.
	
\item \label{top}  $\mathcal{I}^+_M$ is an open subspace of the space $\mathcal{I}_M$.
\end{enumerate}

(1) Since $\mathcal{I}_M$ is an algebraic lattice (see \textsection \ref{prlm}), the desired spectrality follows from \cite[Theorem 4.2]{P94}.

(2) 
Let  $\{K_{ \lambda}\}_{\lambda \in \Lambda}$ be a family of subbasic closed sets of a normal structure space $\mathcal{I}^+_M$   such that $\bigcap_{\lambda\in \Lambda}K_{ \lambda}=\emptyset.$ Let $\{I_{ \lambda}\}_{\lambda \in \Lambda}$ be a family of ideals of $\mathcal{I}_M$ such  that $\forall \lambda \in \Lambda,$  $K_{ \lambda}=\mathcal{V}_{I_{ \lambda}}.$  Since $\bigcap_{\lambda \in \Lambda}\mathcal{V}_{I_{ \lambda}}=\mathcal{V}_{\bigcup_{\lambda \in \Lambda}I_{ \lambda}},$ we get  $\mathcal{V}_{\bigcup_{\lambda \in \Lambda}I_{ \lambda}}=\emptyset.$ This implies that the ideal $ \sum_{\lambda \in \Lambda}I_{ \lambda}$ must be equal to $M$. Then, in particular, we obtain $1=n_{ \lambda_1}\cdots n_{ \lambda_n},$ where $n_{ \lambda_i}\in I_{\lambda_i}$, for $i=1, \ldots, n$. This implies    $M=\sum_{  i \, =1}^{ n}I_{\lambda_i}.$ Hence,   $\bigcap_{ i\,=1}^{ n}K_{ \lambda_i}=\emptyset,$ and $\mathcal{I}^+_M$ is quasi-compact by Alexander Subbasis Theorem.  	
	
(3) To show the existence of generic points of irreducible closed subsets of $\mathcal{I}^+_M$, it is sufficient to show that $\mathcal{V}_I=\mathcal{C}_I$, whenever $I\in\mathcal{V}_I$. Since $\mathcal{C}_I$ is the smallest closed set containing $I$, and since $\mathcal{V}_I$ is a closed set containing $I$, obviously then  $\mathcal{C}_I\subseteq \mathcal{V}_I$. 
For the reverse inclusion, if $\mathcal{C}_I= \mathcal{I}^+_M$, then 
\[ 
\mathcal{I}^+_M=\mathcal{C}_I\subseteq \mathcal{V}_I\subseteq \mathcal{I}^+_M.
\] 
This proves that $\mathcal{V}_I=\mathcal{C}_I$. Suppose that $\mathcal{C}_I\neq \mathcal{I}^+_M$. Since $\mathcal{C}_I$ is a closed set,  there exists an  index set, $\Lambda$, such that,  for each $\lambda\in\Lambda$, there is a positive integer $n_{\lambda}$ and ideals $I_{\lambda 1},\dots, I_{\lambda n_\lambda}$ of $M$ such that 
\[
\mathcal{C}_I={\bigcap_{\lambda\in\Lambda}}\left({\bigcup_{ i\,=1}^{ n_\lambda}}\mathcal{V}_{I_{\lambda i}}\right).
\]
Since  
$\mathcal{C}_I\neq \mathcal{I}^+_M,$ we  assume that ${\bigcup_{ i\,=1}^{ n_\lambda}}\mathcal{V}_{I_{\lambda i}}$ is non-empty for each $\lambda$. Therefore, $I\in   {\bigcup_{ i\,=1}^{ n_\lambda}}\mathcal{V}_{I_{\lambda i}}$ for each $\lambda$, and hence \[\mathcal{V}_I\subseteq {\bigcup_{ i=1}^{ n_\lambda}}\mathcal{V}_{I_{\lambda i}},\] that is, $\mathcal{V}_I\subseteq \mathcal{C}_I$ as desired. 
To obtain the uniqueness of the generic point, it is sufficient to prove that $\mathcal{I}^+_M$ is a $T_0$-space. Let $I$ and $I'$ be two distinct elements of $\mathcal{I}^+_M$. Then, without loss of generality, we may assume that $I\nsubseteq I'$. Therefore $\mathcal{V}_I$ is a closed set containing $I$ and missing $I'$. 

(4) By considering coarse lower topology on $\mathcal{I}_M$, we immediately obtain \[M=\mathcal{V}(M)=\mathcal{C}_M,\] and hence $\mathcal{I}_M \setminus\mathcal{I}^+_M$ is closed. This  implies $\mathcal{I}^+_M$ is open as required. \hfill \qedsymbol
 
\smallskip

\end{document}